\newtheorem{thm}{Theorem}
\newtheorem*{thm*}{Theorem}
\newtheorem{lem}[thm]{Lemma}
\newtheorem{cor}[thm]{Corollary}
\newcommand{\Sp}{S} 		
\newcommand{\prob}{\mathbf{P}}	
\renewcommand{\P}{\prob}	
\newcommand{\Q}{\mathbf{Q}}	
\newcommand{\esp}{\mathbf{E}}	
\newcommand{\PS}{\mathcal{P}_2(\Sp)}			
\newcommand{\bs}{b^\star}	
\newcommand{\R}{\mathbf R}
\newcommand{\M}{\mathbf M}
\newcommand{\e}{\varepsilon}
\newcommand{\eps}{\varepsilon}
\newcommand{\lin}{\mathrm{Lin}}
\renewcommand{\phi}{\varphi}
\DeclareMathOperator{\supp}{supp}
\title{A note on flatness of non separable tangent cone at a barycenter}
\author{Thibaut Le Gouic \thanks{\url{thibaut.le_gouic@math.cnrs.fr}}
\thanks{National Research University Higher School of Economics (HSE), Faculty of Computer Science, Moscow, Russia} \thanks{Centrale Marseille, I2M, UMR 7373, CNRS, Aix-Marseille univ., Marseille, 13453, France}}
\date{\today}
\begin{document}
\maketitle

\abstract{Given a probability measure $\P$ on an Alexandrov space $\Sp$ with curvature bounded below, we prove that the support of the pushforward of $\P$ on the tangent cone $T_{\bs}\Sp$ at its (exponential) barycenter $\bs$ is a subset of a Hilbert space, without separability of the tangent cone.}

\begin{refsection}
\section{Introduction}
Barycenter of a probability measure $\P$ (a.k.a. Fréchet means) provides an extension of expectation on Euclidean space to arbitrary metric spaces.
We present here a useful tool for the study of barycenters on Alexandrov spaces with curvature bounded below: the support of $\log_{\bs}\#\P$ in the tangent cone at the barycenter is included in a Hilbert space.
This result has been stated in \cite{yokota_rigidity_2012} as Theorem 45, however the proof is not written.
Moreover, there is an extra assumption of support of $\log_{\bs}\#\P$ being separable, which does not even seem to be a consequence of the support of $\P$ being separable.
This paper present a proof of this result, without this extra separable assumption.
The proof is essentially the one of Theorem 45 of \cite{yokota_rigidity_2012}, with needed approximations dealt with a bit differently.

\section{Setting and main result}

We use a classical notion of curvature bounded below for geodesic spaces, referred to as Alexandrov curvature.
We recall several notions whose formal definitions can be found for instance in \cite{burago_course_2001} or in the work in progress \cite{alexander_alexandrov_2019}.

For a metric space $(\Sp,d)$, we denote by $\PS$ that set of probability measures $\P$ on $\Sp$ with finite moment of order $2$ (i.e. there exists $x\in\Sp$ such that $\int d^2(x,y)d\P(y)<\infty$).
The support of a measure $\P$ will be denoted by $\supp\P$.

A \emph{geodesic space} is a metric space $(\Sp,d)$ such that every two points $x,y\in\Sp$ at distance is connected by a curve of length $d(x,y)$.
Such shortest curves are called \emph{geodesics}.
For $\kappa\in\R$, the \emph{model space} $(\M_\kappa,d_\kappa)$ denotes the $2$-dimensional surface of constant Gauss curvature $\kappa$.
A geodesic space $(\Sp,d)$ is an \emph{Alexandrov space with curvature bounded below by} $\kappa\in \R$ if for every triangle ($3$-uple) $(x_0,x_1,y)\in\Sp$, and a constant speed geodesic $(x_t)_{t\in [0;1]}$ there exists an isometric triangle $(\tilde x_0,\tilde x_1,\tilde y)\in\M_\kappa$, such that the geodesic $(\tilde x_t)_{t\in[0;1]}$ satisfies for all $t\in [0;1]$,
\[
d(y,x_t)\ge d(\tilde y,\tilde x_t).
\]
For such spaces, angles between two unit-speed geodesics $\gamma_1,\gamma_2$ starting at the same point $p\in\Sp$ can be defined as follows:
\[
\cos\angle_p(\gamma_1,\gamma_2)=\lim_{t\rightarrow 0}\frac{d^2(\gamma_1(t),p)+d^2(\gamma_2(t),p)-d^2(\gamma_1(t),\gamma_2(t))}{2d(p,\gamma_1(t))d(p,\gamma_2(t))},
\]
where angle $\angle_p(\gamma_1,\gamma_2)\in[0;\pi]$.
Denote by $\Gamma_p$ the set of all unit-speed geodesics emanating from $x$.
Using angles, we can define the \emph{tangent cone $T_p\Sp$ at $p\in\Sp$} as follows.
First define $T'_p\Sp$ as the (quotient) set $\Gamma_x\times \R^+$, equipped with the (pseudo-)metric defined by
\[
\|(\gamma_1,t)-(\gamma_2,s)\|^2_p := s^2 + t^2 - 2s.t\cos\angle_p(\gamma_1,\gamma_2).
\]
Then, the \emph{tangent cone $T_p\Sp$} is defined as the completion of $T_p\Sp$.
We will use the notation for $u,v\in T_p\Sp$,
\[
\langle u,v\rangle_p:=\frac{1}{2}(\|u\|^2+\|v\|^2-\|u-v\|^2),
\]
We will often identify a point $\gamma(t)\in\Sp$ with $(\gamma,t)\in T_p\Sp$.
Although such $\gamma$ might not be unique, we will implicitly assume the choice of a measurable map $\log_p:\Sp\rightarrow T_p\Sp$, called \emph{logarithmic map}, such that for all $x\in\Sp$, there exists a geodesic $\gamma$ emanating from $p$ such that, for some $t>0$, $\gamma(t)=x$ and
\[
\log_p(x)=(\gamma,t).
\]
Then the \emph{pushforward} of $\P$ by $\log_p$ will be denoted by $\P\#\log_p$.

The tangent cone is not necessarily a geodesic space (see \cite{halbeisenTangent2000}), however, it is included in a geodesic space - namely the ultratangent space (see for instance Theorem 14.4.2 and 14.4.1 of \cite{alexander_alexandrov_2019}) that is an Alexandrov space with curvature bounded below by 0.

The tangent cone $T_p\Sp$ contains the subspace $\lin_p$ of all points with an \emph{opposite}, formally defined as follows.
A point $u$ belongs to $\lin_p\subset T_p\Sp$ if and only if there exists $v\in T_p\Sp$ such that $\|u\|_p=\|v\|_p$ and
\[
\langle u,v\rangle_p = -\|u\|_p^2.
\]
Our main result is based on the following Theorem.
\begin{thm*}[Theorem 14.5.4 in \cite{alexander_alexandrov_2019}]
The set $\lin_p$ equipped with the induced metric of $T_p\Sp$ is a Hilbert space.
\end{thm*}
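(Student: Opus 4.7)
My plan is to verify the three defining properties of a Hilbert space for $\lin_p$ in turn: completeness, vector-space structure, and the parallelogram identity. Completeness is the cheapest: $T_p\Sp$ is complete by construction (as a completion of $T'_p\Sp$), and the defining condition for membership in $\lin_p$---existence of $v$ with $\|v\|_p=\|u\|_p$ and $\langle u,v\rangle_p=-\|u\|_p^2$---is preserved under limits, so $\lin_p$ is closed and hence complete.

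For the vector-space structure, the cone action supplies scalar multiplication by $\lambda\ge 0$ at once (if $v$ is opposite to $u$ then $\lambda v$ is opposite to $\lambda u$), and one extends to $\lambda<0$ by sending $u$ to its opposite. The substantive step is closure under addition, and here I would work inside the ultratangent $T^{\omega}_p\Sp$, which is a genuine geodesic Alexandrov space of curvature $\geq 0$ containing $T_p\Sp$ isometrically. For $u\in\lin_p$ with opposite $v$, the norm identities force the triple $(v,0,u)$ to lie on a geodesic of length $2\|u\|_p$ in $T^{\omega}_p\Sp$, and the cone scaling then extends this to a bi-infinite line $\R u$ through the vertex. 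The Toponogov splitting theorem for non-negatively curved Alexandrov spaces (Cheeger--Gromoll type) yields an isometric splitting $T^{\omega}_p\Sp\cong \R\times Y_u$, with $\R u$ as the first factor.

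Iterating this for two elements $u,u'\in\lin_p$ gives two line-splittings, both through the vertex $0$. Since both lines pass through $0$, a second application of the splitting theorem inside the factor $Y_u$ (to the image of the line $\R u'$) amalgamates the two $\R$-factors into a Euclidean $2$-plane: $T^{\omega}_p\Sp\cong \R^2\times Z_{u,u'}$, with $u$ and $u'$ sitting in the planar factor. This plane hosts a well-defined sum $u+u'$ whose reflection through $0$ serves as an opposite, so $u+u'\in\lin_p$. Iterating over finite families inside $\lin_p$ produces finite-dimensional Euclidean subspaces on which the parallelogram identity is tautological; passing to arbitrary elements of $\lin_p$ by continuity of the norm and the already-established completeness upgrades this to the parallelogram identity on all of $\lin_p$, so its metric derives from a genuine inner product and $\lin_p$ is a Hilbert space.

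The main obstacle I anticipate is the iteration step: one must verify that the splittings produced by distinct elements of $\lin_p$ actually amalgamate compatibly rather than merely producing unrelated $\R$-factors. Concretely, for $u,u'\in\lin_p$ one needs the line $\R u'$ to descend isometrically to a line in $Y_u$ (so that a second splitting applies), and one needs the resulting Euclidean inner product on the $\R^2$-factor to match the intrinsic $\langle u,u'\rangle_p$ computed in $T_p\Sp$. A secondary annoyance is the passage between $T_p\Sp$ and $T^{\omega}_p\Sp$: although $\lin_p$ lies inside the linear part computed in the ultratangent by formal nonsense, keeping track of which norms and inner products are being used requires care to avoid circularity.
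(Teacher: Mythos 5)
First, note that the paper itself contains no proof of this statement: it is imported verbatim from \cite{alexander_alexandrov_2019}, so your proposal can only be compared with the argument in that reference, which does indeed run through lines through the apex and splitting/sum-type lemmas; your skeleton is therefore the right kind of argument. However, as written there is a genuine gap exactly at the step that carries the content of the theorem. Your candidate sum $u+u'$, the midpoints, and the proposed opposite obtained by reflecting through $0$ are all produced inside the Euclidean factor of the ultratangent space $T^{\omega}_p\Sp$, not inside $T_p\Sp$. Membership in $\lin_p$ means: a point \emph{of $T_p\Sp$} admitting an opposite \emph{in $T_p\Sp$}, and $T_p\Sp$ is in general a strict, non-geodesic subset of the ultratangent space (the paper cites \cite{halbeisenTangent2000} precisely for tangent cones that are not geodesic). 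So the sentence ``this plane hosts a well-defined sum $u+u'$ whose reflection through $0$ serves as an opposite, so $u+u'\in\lin_p$'' asserts exactly what needs to be proved: you must show that the flat through the apex produced by the splitting lies in the closure of $T'_p\Sp$, or construct the sum intrinsically inside the tangent cone, e.g.\ via the Lang--Schroeder subadditivity lemma (Lemma \ref{lem:subadd}) applied to $u$, $u'$ and their opposites. Without such a step, your argument only shows that $\lin_p$ embeds isometrically into a Hilbertian flat of $T^{\omega}_p\Sp$, i.e.\ that it is isometric to a \emph{subset} of a Hilbert space, which is strictly weaker than the theorem.

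Two secondary issues. The splitting theorem you invoke must be the version for complete geodesic spaces of curvature bounded below by $0$ without local compactness, since $T^{\omega}_p\Sp$ is typically not proper; the classical Toponogov/Cheeger--Gromoll statements assume finite dimension, so you need the general version proved in \cite{alexander_alexandrov_2019}, and saying so matters (otherwise this is a second gap). Also, closedness of $\lin_p$ is not simply ``the defining condition is preserved under limits'': given $u_n\to u$ with opposites $v_n$, no compactness is available to extract a limit of $(v_n)$; one must show $(v_n)$ is Cauchy, for instance via the comparison $\angle(v_n,v_m)\le 2\pi-\angle(v_n,u_n)-\angle(u_n,v_m)$, the same inequality used in the paper's proof of Theorem \ref{thm:main}. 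By contrast, the iteration worry you flag yourself is not a real obstruction: a line in a product of geodesic spaces projects to an affine line in each factor, so the second splitting applies, and the inner products agree because the inclusion $T_p\Sp\subset T^{\omega}_p\Sp$ is isometric and $\langle\cdot,\cdot\rangle_p$ is defined purely from distances.
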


A point $\bs\in\Sp$ is a \emph{barycenter} of the probability measure $\P$ if for all $b\in\Sp$
\[
\int d^2(x,\bs)d\P(x)\le \int d^2(x,b)d\P(x).
\]
Such barycenter might not be unique, neither exist.
However, when they exist, they satisfy
\begin{equation}\label{eq:expbar}
\int \langle x,y\rangle_{\bs} d\P\otimes\P(x,y)=0.
\end{equation}
A point $\bs\in\Sp$ satisfying \eqref{eq:expbar} is called an \emph{exponential barycenter} of $\P$.

We can now state our main result.
\begin{thm}
\label{thm:main}
Let $(\Sp,d)$ be an Alexandrov space with curvature bounded below by some $\kappa\in\R$ and $\P\in\PS$.
If $\bs\in\Sp$ is an exponential barycenter of $\P$, then the $\supp(\log_{\bs}\#\P)\subset \lin_{\bs}\Sp$.
In particular, $\supp(\log_{\bs}\#\P)$ is included in a Hilbert space.
\end{thm}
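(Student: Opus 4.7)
My approach is pointwise: I would show that every $u_0 \in \supp(\log_{\bs}\#\P)$ admits an opposite in $T_{\bs}\Sp$, so that by definition $\supp(\log_{\bs}\#\P)\subset \lin_{\bs}$ and the Hilbert--space conclusion follows from the cited Theorem~14.5.4. Writing $\mu := \log_{\bs}\#\P$, the exponential barycenter condition~\eqref{eq:expbar} rewrites on the tangent cone as
\[
\int\!\!\int \langle u,v\rangle_{\bs}\, d\mu(u)\, d\mu(v) = 0.
\]
Throughout, I would embed $T_{\bs}\Sp$ isometrically inside the ultratangent space---an Alexandrov space of curvature bounded below by $0$---so as to have the CBB$(0)$ Cauchy--Schwarz inequality $|\langle u,v\rangle|\le \|u\|\|v\|$ and the monotonicity of Alexandrov comparison angles at hand.

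Fix $u_0 \in \supp\mu\setminus\{0\}$. The central step is to produce a sequence $(v_n)\subset T_{\bs}\Sp$ of \emph{approximate opposites} of $u_0$, with $\|v_n\|\to\|u_0\|$ and $\langle u_0,v_n\rangle\to -\|u_0\|^2$. Following the scheme of Yokota~\cite[Thm.~45]{yokota_rigidity_2012}, I would extract the $v_n$ from the interplay between the barycenter identity and the localisation $u_0\in\supp\mu$: the ``diagonal'' term $\int_{B(u_0,\e_n)\times B(u_0,\e_n)}\langle u,v\rangle\, d\mu(u)\,d\mu(v)$ is positive of order $\|u_0\|^2\mu(B(u_0,\e_n))^2$, and the barycenter identity forces this to be cancelled by strictly negative contributions which, by the CBB$(0)$ Cauchy--Schwarz lower bound $\langle u,v\rangle\ge -\|u\|\|v\|$, can only arise from directions nearly antipodal to $u_0$ at controlled norm; renormalising such directions yields $(v_n)$.

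Having $(v_n)$, I would invoke CBB$(0)$ monotonicity of comparison angles to see that two near-opposites of $u_0$ of comparable norm must sit at a small angle from one another at the apex, so $(v_n)$ is Cauchy in $T_{\bs}\Sp$. Completeness of $T_{\bs}\Sp$ (by construction, as a completion) yields a limit $v_\infty$ satisfying $\|v_\infty\|=\|u_0\|$ and $\langle u_0,v_\infty\rangle=-\|u_0\|^2$, i.e.\ an opposite of $u_0$. Since $u_0\in\supp\mu\setminus\{0\}$ was arbitrary (and $0$ trivially lies in $\lin_{\bs}$), the inclusion $\supp\mu\subset\lin_{\bs}$ follows.

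\textbf{Main obstacle.} The delicate step is the extraction of the $v_n$ when $\supp\mu$ is not assumed separable. Under separability (as in~\cite{yokota_rigidity_2012}) one can enumerate a countable dense subset of $\supp\mu$ and carry out a diagonal argument; without separability, I would instead construct the $v_n$ through shrinking balls $B(u_0,\e_n)$ around the fixed $u_0$ alone, verifying the convergences $\|v_n\|\to\|u_0\|$ and $\langle u_0,v_n\rangle\to-\|u_0\|^2$ via a continuous-in-$\e$ argument. This is precisely the approximation the note announces it will handle differently, and where the CBB$(0)$ comparisons need to be wielded most carefully.
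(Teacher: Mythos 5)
Your overall skeleton (fix $u_0\in\supp\mu$, produce approximate opposites $v_n$, show they are Cauchy via angle comparison at the apex, pass to the limit in the complete cone) agrees with the paper's proof, and your Cauchy/limit step is essentially the paper's. The gap is in the central extraction step. You claim that, since the diagonal term $\int_{B(u_0,\e)\times B(u_0,\e)}\langle u,v\rangle\,d\mu\,d\mu$ is positive, the cancelling negative contributions ``can only arise from directions nearly antipodal to $u_0$'', and that renormalising such directions yields $(v_n)$. This inference is false: the support need not contain any direction even remotely antipodal to $u_0$. Already for $\mu$ uniform on three unit vectors of $\R^2$ at mutual angles $2\pi/3$ the exponential barycenter identity $\int\!\!\int\langle u,v\rangle\,d\mu\,d\mu=0$ holds, yet for $u_0$ one of these vectors every other support direction satisfies $\langle u_0,v\rangle=-\tfrac12\|u_0\|\|v\|$, far from $-\|u_0\|\|v\|$; the cancellation is spread over moderately negative pairs, and the Cauchy--Schwarz lower bound does not force any concentration near the antipode. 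The approximate opposite must therefore be manufactured as a ``sum'' of many support directions, and in a nonlinear tangent cone there is no addition: supplying a substitute is precisely the role of the Lang--Schroeder subadditivity lemma (Lemma \ref{lem:subadd}), which produces a single point $y\in T_{\bs}\Sp$ whose inner products against test vectors are dominated by $\sum_i\langle x_i,\cdot\rangle+\eps$ and whose squared norm is at most $\sum_{i,j}\langle x_i,x_j\rangle+\eps$. Without this ingredient (or an equivalent device) your plan has no mechanism for producing $v_n$ at all.

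Two further ingredients of the paper are also missing and would be needed to complete your scheme. First, Lemma \ref{lem:subadd} applies only to finitely many tangent vectors, so to feed it the localized integrals over $B_\delta^c$ one must approximate $\P$ by finitely supported measures; in the non-separable setting this cannot be done by weak convergence of empirical measures, and the paper instead uses an $L^2$ (U-statistics) argument (Lemmas \ref{lem:sturmNNC} and \ref{lem:approx}). This is exactly where the dropped separability hypothesis is handled, and your ``continuous-in-$\e$ argument'' around $u_0$ does not address it. Second, to get $\langle u_0,v_n\rangle\to-\|u_0\|^2$ and $\|v_n\|\to\|u_0\|$ you need the pointwise identity $\int\langle u_0,y\rangle\,d\mu(y)=0$ for $u_0\in\supp\mu$ (the second part of Lemma \ref{lem:sturmNNC}, itself proved via the finite-support approximation and a perturbation of $\mu$ by $\delta_{u_0}$); the double-integral barycenter identity alone does not localize the cancellation at the chosen point $u_0$.
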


This result allows to prove the following Corollary, that has been implicitly used in \cite{ahidar-coutrixetalRate2018}.

\begin{cor}[Linearity]
\label{cor:lin}
Let $b\in T_{\bs}\Sp$
Then, the map $\langle .,b\rangle_{\bs}:\lin_{\bs}\rightarrow \R$ is continuous and linear.
In particular, if $\bs$ is an exponential barycenter of $\P$, then
\[
\int \langle x,b\rangle_{\bs}d\P(x)=0.
\]
\end{cor}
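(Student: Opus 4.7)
The corollary has three components: continuity of $\langle\cdot,b\rangle_{\bs}$ on $\lin_{\bs}$, linearity, and the integral identity under the exponential barycenter hypothesis. Continuity is immediate from the defining formula $\langle w,b\rangle_{\bs}=\tfrac12(\|w\|^2+\|b\|^2-\|w-b\|^2)$ and continuity of the norm on $T_{\bs}\Sp$; the heart of the proof is linearity.

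Positive homogeneity $\langle\lambda w,b\rangle_{\bs}=\lambda\langle w,b\rangle_{\bs}$ for $\lambda\geq 0$ follows directly from scaling in the cone metric, extended to $T_{\bs}\Sp$ by continuity. For additivity, given $u,v\in\lin_{\bs}$, the Hilbert-affine map $\gamma(t)=(1-t)u+tv$ isometrically embeds $\R$ as a complete geodesic line into $\lin_{\bs}\subset T_{\bs}\Sp\subset U$, where $U$ is the ultratangent space, which is complete and CBB$(0)$. Setting $s:=\|u-v\|$ (the speed of $\gamma$) and $h(t):=d^2(b,\gamma(t))-s^2t^2$, Toponogov's comparison applied to triples $\gamma(a),\gamma(t),\gamma(c)$ in $U$ yields that $h$ is concave on $\R$. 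The Busemann limits $\beta^{\pm}(b):=\lim_{c\to\pm\infty}(d(b,\gamma(c))-s|c|)$ exist as monotone bounded limits (monotonicity by the triangle inequality along $\gamma$, boundedness by $\|b-\gamma(0)\|$) and satisfy $\beta^+(b)+\beta^-(b)\geq 0$ since $d(b,\gamma(c))+d(b,\gamma(-c))\geq d(\gamma(-c),\gamma(c))=2sc$. Taking $c\to+\infty$ in Toponogov together with the expansion $h(c)=2sc\beta^+(b)+o(c)$ gives $h(t)\geq h(a)+2s(t-a)\beta^+(b)$ for $a<t$, so both one-sided derivatives of $h$ are $\geq 2s\beta^+(b)$ at every point; the symmetric limit $a\to-\infty$ gives that both one-sided derivatives are $\leq-2s\beta^-(b)$. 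Combined with $\beta^++\beta^-\geq 0$, this forces $h'(t)\equiv 2s\beta^+(b)$, so $h$ is affine. Since $t\mapsto\|\gamma(t)\|^2-s^2t^2$ is also affine (parallelogram identity in the Hilbert space $\lin_{\bs}$), $t\mapsto\langle\gamma(t),b\rangle_{\bs}$ is affine in $t$, and evaluating at $t=1/2$ yields $\langle(u+v)/2,b\rangle_{\bs}=\tfrac12(\langle u,b\rangle_{\bs}+\langle v,b\rangle_{\bs})$, which combined with positive homogeneity gives $\langle u+v,b\rangle_{\bs}=\langle u,b\rangle_{\bs}+\langle v,b\rangle_{\bs}$.

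For the integral identity, Theorem~\ref{thm:main} puts $\supp(\log_{\bs}\#\P)\subset\lin_{\bs}$, and since $\P\in\PS$, Cauchy--Schwarz ensures $\int\|\log_{\bs}(x)\|\,d\P(x)<\infty$, so the Bochner integral $m:=\int\log_{\bs}(x)\,d\P(x)\in\lin_{\bs}$ is well defined. Linearity of $\langle\cdot,\cdot\rangle_{\bs}$ in each slot on $\lin_{\bs}$ (the second by symmetry of the inner product), together with the defining property of the Bochner integral, rewrites \eqref{eq:expbar} as $\|m\|^2=0$, hence $m=0$, and then for every $b\in T_{\bs}\Sp$, $\int\langle\log_{\bs}(x),b\rangle_{\bs}\,d\P(x)=\langle m,b\rangle_{\bs}=0$.

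The principal obstacle is closing the gap in Toponogov's one-sided inequality along $\gamma$: naive Toponogov gives only concavity of $h$, and promoting to affinity demands linking the two ends of the line through the equality $\beta^+(b)+\beta^-(b)=0$, which is a baby form of Toponogov's splitting theorem in the ultratangent space.
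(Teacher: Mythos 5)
Your linearity argument is correct but takes a genuinely different route from the paper. The paper's proof is a short convexity/concavity trick: for a geodesic $x_t=(1-t)x_0+tx_1$ in the Hilbert space $\lin_{\bs}$, the CBB$(0)$ comparison in the ambient (ultratangent) space gives $\|x_t-b\|^2\ge(1-t)\|x_0-b\|^2+t\|x_1-b\|^2-t(1-t)\|x_0-x_1\|^2$, while the parallelogram identity makes $\|x_t\|^2$ exactly quadratic; subtracting yields convexity of $x\mapsto\langle x,b\rangle_{\bs}$ along geodesics of $\lin_{\bs}$, and applying the same inequality to the opposites $-x_0,-x_1$ (using $\langle -x,b\rangle_{\bs}=-\langle x,b\rangle_{\bs}$ for points of $\lin_{\bs}$) gives concavity, hence affinity and linearity. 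You instead run a Busemann/splitting-type argument along the complete line $t\mapsto(1-t)u+tv$ in the ultratangent space: concavity of $h(t)=d^2(b,\gamma(t))-s^2t^2$ from Toponogov, asymptotic slopes $2s\beta^{\pm}(b)$ at the two ends, and $\beta^++\beta^-\ge 0$ squeezing $h$ to be affine. The steps check out (the line is indeed a global geodesic of the ultratangent space because the induced metric on $\lin_{\bs}$ is Hilbertian), and your route has the merit of not invoking the identity $\langle -x,b\rangle_{\bs}=-\langle x,b\rangle_{\bs}$ for opposites, at the cost of a substantially longer argument; the paper's version buys brevity by using that standard fact about adjacent angles in CBB spaces.

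There is, however, one genuine flaw in your last step: you claim that $\int\|\log_{\bs}(x)\|\,d\P(x)<\infty$ makes the \emph{Bochner} integral $m=\int\log_{\bs}(x)\,d\P(x)$ well defined. In a non-separable Hilbert space this is not enough: Bochner integrability requires strong (essentially separably valued) measurability, and the whole point of this paper is that $\supp(\log_{\bs}\#\P)$ need not be separable, so this hypothesis is not available. This is exactly why the paper phrases the second statement in terms of the \emph{Pettis} integral. The gap is easily repaired: the map $v\mapsto\int\langle\log_{\bs}(x),v\rangle_{\bs}\,d\P(x)$ is a bounded linear functional on $\lin_{\bs}$ (boundedness by Cauchy--Schwarz, measurability of the integrand from measurability of $\log_{\bs}$ and continuity of the bracket), so Riesz representation provides $m\in\lin_{\bs}$ with $\langle m,v\rangle_{\bs}=\int\langle\log_{\bs}(x),v\rangle_{\bs}\,d\P(x)$ for all $v\in\lin_{\bs}$; then Fubini and \eqref{eq:expbar} give $\|m\|^2=0$, and for $b\in T_{\bs}\Sp$ one concludes via the first part of the corollary (the restriction of $\langle\cdot,b\rangle_{\bs}$ to $\lin_{\bs}$ is a continuous linear functional, hence represented by some $v_b\in\lin_{\bs}$), so $\int\langle\log_{\bs}(x),b\rangle_{\bs}\,d\P(x)=\langle m,v_b\rangle_{\bs}=0$. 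With this substitution your proof is complete; as written, the appeal to the Bochner integral reintroduces the separability assumption the theorem is designed to avoid.
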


\section{Proofs}
Recall that we always identify a point in $\Sp$ and its image in the tangent cone $T_{p}\Sp$ by the $\log_p$ map.

\begin{proof}[Proof of Corollary \ref{cor:lin}]
We check that $x\mapsto \langle x,b\rangle_{\bs}$ is a convex and concave function in $\lin_{\bs}\Sp$.
Let $t\in (0,1)$, $x_0,x_1$ in $\lin_{\bs} \Sp$, and set $x_t=(1-t)x_0+tx_1$.
Since the tangent cone is included in an Alexandrov space with curvature bounded below by $0$ on one hand, and $\lin_{\bs}$ is a Hilbert space on the other hand,
\begin{align*}
\langle x_t,b\rangle_{\bs}&=\frac{1}{2}\left(\|x_t\|_{\bs}^2+\|b\|_{\bs}^2-\|x_t-b\|^2\right)\\
&\le \frac{1}{2} \left((1-t)(\|x_0\|_{\bs}^2-\|x_0-b\|_{\bs}^2)+t(\|x_1\|_{\bs}^2-\|x_1-b\|^2)+\|b\|_{\bs}^2\right)\\
&=(1-t)\langle x_0,b\rangle_{\bs}+t\langle x_1,b\rangle_{\bs}.
\end{align*}
The same lines applied to $-x_0$ and $-x_1$ gives the converse inequality
\[
\langle -x_t,b\rangle_{\bs}\le (1-t)\langle -x_0,b\rangle_{\bs} + t\langle -x_1,b\rangle_{\bs}.
\]
The second statement follows from the fact that $\bs$ is a Pettis integral of the pushforward of $\P$ onto $\lin_{\bs}\subset T_{\bs}\Sp$, as a direct consequence of Theorem \ref{thm:main}.
\end{proof}

\begin{proof}[Proof of Theorem \ref{thm:main}]
Let $x\in \supp \P$.
For $U=\{x\}$, use Lemma \ref{lem:approx} with $\Q=\P$ and $B_\delta$ a ball of radius $\delta$ around $x$ in $T_{\bs}\Sp$, to get a sequence $(y_\delta^n)_n\subset T_{\bs}\Sp$.
Then,
\begin{align*}
\limsup_n\cos \angle(\uparrow_{\bs}^x,&\uparrow_{\bs}^{y_\delta^n})=\limsup_n\frac{\langle x,y_\delta^n\rangle_{\bs}}{d(\bs,x)d(\bs,y_\delta^n)}\\
&=\frac{1}{d(\bs,x)}\limsup_n\langle x,y_\delta^n\rangle_{\bs}\frac{1}{\lim_nd(\bs,y_\delta^n)}\\
&\le \frac{1}{d(\bs,x)}\frac{\int_{B_\delta^c}\langle x,y\rangle_{\bs} d\P(x)}{\P(B_\delta)}\frac{\P(B_\delta)}{\left(\int_{B_\delta}\int_{B_\delta}\langle x,y\rangle_{\bs} d\P\otimes\P(x,y)\right)^{1/2}}.
\end{align*}
Then, since $\int \langle x,y\rangle_{\bs} d\P(y)=0$ by Lemma \ref{lem:sturmNNC}, letting $\delta\rightarrow 0$, one gets
\[
\frac{1}{\P(B_\delta)}\int_{B_\delta^c}\langle x,y\rangle_{\bs} d\P(y)\rightarrow -d^2(\bs,x).
\]
and
\[
\frac{\left(\int_{B_\delta}\int_{B_\delta}\langle x,y\rangle_{\bs} d\P\otimes\P(x,y)\right)^{1/2}}{\P(B_\delta)}\rightarrow d(\bs,x)
\]
Thus, 
\[
\lim_{\delta\rightarrow 0^+}\limsup_n\cos \angle(\uparrow_{\bs}^x,\uparrow_{\bs}^{y_\delta^n})=-1
\]
One can thus choose $(\bar y^n)_n$ a sequence in $(y_\delta^n)_{n,\delta}$ such that $\cos\angle(\uparrow_{\bs}^x,\uparrow_{\bs}^{\bar y^n})\rightarrow -1$.
Since $T_{\bs}\Sp$ is a subspace of an Alexandrov space of curvature bounded below by $0$, we also have 
\begin{align*}
\angle(\uparrow_{\bs}^{\bar y^n},\uparrow_{\bs}^{\bar y^k})&\le 2\pi - \angle(\uparrow_{\bs}^{\bar y^n},\uparrow_{\bs}^{x})-\angle(\uparrow_{\bs}^{x},\uparrow_{\bs}^{\bar y^k})\\
&\rightarrow 0,
\end{align*}
as $n,k\rightarrow \infty$.
Thus $(\bar y^n)_n$ correspond to a Cauchy sequence in the space of direction, and thus admits a limit in $T_{\bs}\Sp$ - since its "norm" also admits a limit $d(\bs,x)$.
Finally, its limit $\bar y$ satisfies $\cos \angle(\uparrow_{\bs}^x,\uparrow_{\bs}^{\bar y})=-1$, and therefore, it is the opposite $\bar y= -x$.
\end{proof}

\begin{lem}[Proposition 1.7 of \cite{sturm_metric_1999} for non separable metric space]
\label{lem:sturmNNC}
Suppose $(\Sp,d)$ is an Alexandrov space with curvature bounded below.
Then, for any probability measure $\Q\in \PS$,
\[
\int \langle x,y\rangle_{\bs} d\Q\otimes \Q(x,y)\ge 0.
\]
Moreover, if $b^\star$ is an exponential barycenter of $\Q$, then for all $x\in\supp \Q$,
\[
\int \langle x,y\rangle_{\bs} d\Q(y)=0.
\]
\end{lem}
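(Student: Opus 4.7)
The plan is to establish the integrated inequality first, and then deduce the pointwise equality under the exponential barycenter hypothesis by a perturbation of $\Q$.

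For the first inequality, the strategy is to reduce to Sturm's original Proposition~1.7, which is available in the separable setting, by restricting to a separable sub-Alexandrov space. Since $\Q\in\PS$, the pushforward $\mu:=\log_{\bs}\#\Q$ is a Borel probability measure on $T_{\bs}\Sp$ with finite second moment, and the inner product $(u,v)\mapsto\langle u,v\rangle_{\bs}$ on $T_{\bs}\Sp\times T_{\bs}\Sp$ is continuous and dominated by $\|u\|_{\bs}\|v\|_{\bs}$. So a $W_2$-approximation of $\mu$ by finitely supported measures $\mu_n=\log_{\bs}\#\Q_n$ (with $\Q_n=\sum_i \lambda_i^{(n)}\delta_{x_i^{(n)}}$, $x_i^{(n)}\in\supp\Q$) makes the double integral pass to the limit. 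For each $n$, the finite set $\{\bs,x_1^{(n)},\dots,x_{k_n}^{(n)}\}$ is contained in a separable, closed, convex sub-Alexandrov space $\Sp'_n\subset\Sp$ obtained by iterated geodesic hulls followed by closure; such convex subspaces inherit the lower curvature bound. Since the tangent-cone inner products at $\bs$ are intrinsic to the distances along geodesics lying in $\Sp'_n$, they agree whether computed in $T_{\bs}\Sp'_n$ or in $T_{\bs}\Sp$. Sturm's Proposition~1.7 applied in the separable space $\Sp'_n$ then gives $\sum_{i,j}\lambda_i^{(n)}\lambda_j^{(n)}\langle x_i^{(n)},x_j^{(n)}\rangle_{\bs}\ge 0$, and passing to the limit yields the claim.

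For the ``moreover'' part, assume $\bs$ is an exponential barycenter of $\Q$ and fix $x\in\supp\Q$. Apply the first inequality to the perturbed probability measure $\Q_t:=(1-t)\Q+t\delta_x$ for $t\in(0,1)$: expanding the resulting double integral by symmetry of $\langle\cdot,\cdot\rangle_{\bs}$ and using the exponential barycenter condition $\int\int\langle u,v\rangle_{\bs}d\Q\otimes\Q=0$ gives
\[
2t(1-t)\int \langle x,y\rangle_{\bs}d\Q(y)+t^2 d^2(\bs,x)\ge 0,
\]
so that dividing by $t$ and letting $t\to 0^+$ yields $h(x):=\int \langle x,y\rangle_{\bs}d\Q(y)\ge 0$, for every $x\in\Sp$ in fact. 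Since the barycenter condition equivalently says $\int h\,d\Q=0$, the nonnegative function $h$ vanishes $\Q$-almost everywhere, and upgrading to every $x\in\supp\Q$ is done by combining the denseness in $\supp\Q$ of points where $h=0$ with an upper semicontinuity property of $h$ inherited from continuity of the inner product on the tangent cone and the Alexandrov comparison controlling the tangent-cone distance by the ambient distance.

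The main obstacle sits in Part~1 and is twofold: (a) justifying the $W_2$-approximation of $\mu$ by finitely supported measures despite possibly non-separable $\supp\Q$, which requires a careful measure-theoretic argument on the tangent cone rather than on $\Sp$, and (b) constructing the separable convex sub-Alexandrov space $\Sp'_n$ and verifying that tangent-cone inner products at $\bs$ are genuinely preserved. Both are essentially folklore consequences of Alexandrov theory; Sturm's original argument does the rest once separability is in place.
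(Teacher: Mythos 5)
The ``moreover'' part of your proposal is essentially the paper's argument (the paper perturbs with $\Q_\e=\frac{1}{1+\e}\Q+\frac{\e}{1+\e}\delta_x$, you with $(1-t)\Q+t\delta_x$; same computation), but the first part has a genuine gap, and it sits exactly at the point the lemma is designed to handle. Your reduction rests on approximating $\log_{\bs}\#\Q$ in $W_2$ by finitely supported measures with atoms in $\supp\Q$. Such an approximation is available only when the measure is tight, i.e.\ concentrated on a separable subset; for a Borel probability measure on a non-separable metric space this is not automatic (it is entangled with set-theoretic questions about measurable cardinals, not folklore), and, worse, even if $\supp\Q\subset\Sp$ were separable, the pushforward under the merely \emph{measurable} map $\log_{\bs}$ need not be separably supported in $T_{\bs}\Sp$ --- the introduction of the paper stresses that this is precisely the hypothesis one wants to remove. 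The paper's proof therefore avoids weak/$W_2$ convergence altogether: it takes the empirical measure $\Q_n$ of an i.i.d.\ sample of law $\Q$, shows by a direct variance (U-statistic) computation that $\esp|\Q\otimes\Q f-\Q_n\otimes\Q_n f|^2=O(1/n)$ for the single integrand $f=\langle\cdot,\cdot\rangle_{\bs}\in L^2(\Q\otimes\Q)$, and then selects a deterministic realization; no topology on $\Sp$ enters at all. Note also that the finite-support case needs no separable sub-space: it is the Lang--Schroeder inequality (Proposition 3.2 of \cite{lang_kirszbrauns_1997}), valid for finitely many points of an arbitrary Alexandrov space, so your construction of $\Sp'_n$ solves a non-problem --- and is itself doubtful in curvature bounded below, where geodesics are neither unique nor continuous in their endpoints: the union of \emph{all} geodesics between two points can already fail to be separable (antipodal points of an infinite-dimensional round sphere), so ``iterated geodesic hulls followed by closure'' does not obviously yield a separable geodesic space inheriting the curvature bound.

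A smaller issue: in the last step you deduce $h(x)=0$ for all $x\in\supp\Q$ from $h\ge 0$, $h=0$ $\Q$-a.e., density, and ``upper semicontinuity'' of $h(x)=\int\langle x,y\rangle_{\bs}d\Q(y)$. The direction you need is \emph{lower} semicontinuity (so that $h(x)\le\liminf_n h(x_n)=0$ along the dense set where $h$ vanishes), and neither semicontinuity is actually available: $\langle x,y\rangle_{\bs}$ depends on the measurable choice of $\log_{\bs}$, and angles do not depend continuously on the endpoint when geodesics are non-unique. The paper is admittedly terse at this point as well, but it does not invoke a regularity of $h$ that is not there; if you want a clean everywhere-on-the-support statement you must argue differently rather than through semicontinuity.
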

\begin{proof}
For brevity, we will adopt the notation $\Q g$ for $\int g d\Q$.

The result for $\Q$ finitely supported is the Lang-Schroeder inequality (Proposition 3.2 in \cite{lang_kirszbrauns_1997}).
Thus, we just need to approximate $\Q\otimes\Q\langle .,. \rangle_{\bs}$ by some $ \Q_n\otimes\Q_n\langle ,.,\rangle_{\bs}  $ for some finitely supported $\Q_n$.

To do this, for i.i.d. random variable $(X_i)_i$ of common law $\Q$, denote $\Q_n$ the empirical measure.
Since $\Sp$ is not separable, we can not apply the fundamental theorem of statistics that ensures almost sure weak convergence of $\Q_n$ to $\Q$.
However, for a measurable function $f:\Sp\times \Sp\rightarrow \R$, such that $\Q\otimes \Q f^2<\infty$, we get the following bound
\begin{align*}
\esp |\Q\otimes \Q f&-\Q_n\otimes \Q_n f|^2\\
&= \frac{1}{n}\sum_{ijkl}\esp [(\Q\otimes\Q f-f(X_i,X_j))(\Q\otimes\Q f-f(X_k,X_l)]\\
&=O(1/n),
\end{align*}
since $n(n-1)(n-2)(n-3)$ of the $n^4$ terms of the sum are equal to 
\[
\esp [(\Q\otimes\Q f-f(X_1,X_2))(\Q\otimes\Q f-f(X_3,X_4))]=0.
\]
And thus, $\Q_n\otimes \Q_n f\rightarrow \Q\otimes \Q f$ in $L^2(\Q^{\otimes\infty})$ and so there exists a (deterministic) probability measure $\Q_n$ supported on $n$ points, such that $\Q_n\otimes \Q_nf\rightarrow \Q\otimes \Q f$.
We thus proved the first result applying $f=\langle .,.\rangle_{\bs}$.

Now applying this first result to the measure $\Q_\e:=\frac{1}{1+\e}\Q + \frac{\e}{1+\e}\delta_x$, we get
\begin{align*}
0&\le (1+\e)\Q_\e\otimes \Q_\e\langle .,.\rangle_{\bs}\\
&=\Q\otimes \Q\langle .,.\rangle_{\bs}+2\e \Q \langle x,.\rangle_{\bs}+\e^2\|x\|_{\bs}^2.
\end{align*}
Letting $\e\rightarrow 0^+$, we get
\[
\Q\langle x,.\rangle_{\bs}\ge 0.
\]
Then equality follows from the hypothesis $\Q\otimes\Q\langle .,.\rangle_{\bs}=0$ meaning that $\bs$ is an exponential barycenter.
\end{proof}

\begin{lem}[Subadditivity, Lemma A.4 of \cite{lang_kirszbrauns_1997}]
\label{lem:subadd}
Let $(\Sp,d)$ be an Alexandrov space with curvature bounded below.
Take $\bs\in\Sp$.
Let $x_1,\dots,x_n\in T'_{\bs}\Sp$ and $U\subset T_{\bs}\Sp$ bounded.
Then, for all $\eps>0$, there exists $y\in T_{\bs}\Sp$ such that for all $u\in U$,
\[
\langle y,u\rangle_{\bs}\le\sum_{i=1}^n\langle x_i,u\rangle_{\bs} +\eps,
\]
and
\[
\|y\|^2\le \sum_{i,j=1}^n\langle x_i,x_j\rangle_{\bs} +\eps.
\]
\end{lem}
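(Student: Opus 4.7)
The plan is to prove the lemma by induction on $n$, reducing to the geometric heart of the statement, which is the two-vector case.

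The base case $n=1$ is immediate by taking $y=x_1$. For the inductive step, I would combine $x_1,x_2$ into some $\tilde y_1$ via the two-vector case, then combine $\tilde y_1, x_3$ into $\tilde y_2$, and so on. At the $k$-th step the two-vector lemma has to be applied with the augmented test set $U\cup\{x_{k+2},\ldots,x_n\}$, so that not only the inner products against the original $u\in U$ propagate, but the cross terms $\langle \tilde y_k,x_{k+2}\rangle_{\bs}$ needed later to control the norm are themselves subadditive up to small error. Distributing the error budget as $\eps/2^k$ and using boundedness of $U$ and of the $\|x_i\|$'s to uniformly bound the relevant Cauchy--Schwarz constants keeps the total error at most $\eps$.

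For the two-vector case, given $x_i=(\gamma_i,a_i)\in T'_{\bs}\Sp$ ($i=1,2$) with $\theta=\angle_{\bs}(\gamma_1,\gamma_2)$, I construct $y=y_t$ explicitly for small $t>0$: set $p_t=\gamma_1(ta_1)$, choose a unit-speed geodesic $\eta_t$ from $p_t$ making angle $\pi-\theta$ at $p_t$ with the geodesic from $p_t$ back to $\bs$, and let $q_t=\eta_t(ta_2)$. Then $y_t\in T'_{\bs}\Sp\subset T_{\bs}\Sp$ is defined as the direction from $\bs$ to $q_t$, rescaled so that $\|y_t\|=d(\bs,q_t)/t$. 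Alexandrov comparison on the triangle $(\bs,p_t,q_t)$ forces $d(\bs,q_t)^2/t^2\to a_1^2+2a_1a_2\cos\theta+a_2^2=\sum_{i,j=1}^{2}\langle x_i,x_j\rangle_{\bs}$ as $t\to 0$, which gives the norm bound. For the inner-product bound with $u=(\gamma_u,a_u)\in U$, comparison applied in the triangles $(\bs,\gamma_u(ta_u),p_t)$ and $(p_t,\gamma_u(ta_u),q_t)$ yields a second-order expansion of $d^2(\gamma_u(ta_u),q_t)$ in $t$, which after rearrangement produces the required inequality in the limit.

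The main obstacle is uniformity over $u\in U$. Because $U$ is merely bounded (no separability nor compactness), one cannot extract a subsequence of $t$ depending on $u$; the second-order remainder in the expansion of $d^2(\gamma_u(ta_u),q_t)$ must be controlled by a $o(t^2)$ quantity depending only on $\diam(U)$, $a_1$, $a_2$. Boundedness of $U$ enters precisely at this point, to guarantee that a single choice of $t$ (and hence a single $y=y_t$) satisfies the inner-product inequality for every $u\in U$ simultaneously.
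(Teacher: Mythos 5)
The paper does not actually prove this lemma: it is quoted as Lemma A.4 of Lang--Schroeder and used as a black box, so your attempt has to be judged on its own. Your outer scheme (induction on $n$, reducing to the two-vector case, augmenting the test set by the remaining $x_i$'s so that the cross terms needed for the norm bound propagate, and budgeting the errors) is the standard and correct way to organize such a proof. The problem is the two-vector construction itself, which is where the whole content of the lemma lies, and as described it does not work.

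Choosing $\eta_t$ at $p_t=\gamma_1(ta_1)$ only by the condition that it makes angle $\pi-\theta$ with the geodesic back to $\bs$ constrains $\eta_t$ relative to $x_1$ alone; the construction never sees the actual direction of $x_2$, only its angle with $x_1$. Consequently the hinge comparison does give $\limsup_t\|y_t\|^2\le a_1^2+2a_1a_2\cos\theta+a_2^2$ (only an inequality, not the convergence you claim, but an inequality suffices there), while the inner-product bound $\langle y,u\rangle_{\bs}\le\langle x_1,u\rangle_{\bs}+\langle x_2,u\rangle_{\bs}$ must fail in general. Already in $\R^3$ with $x_1=e_1$, $x_2=e_2$, the direction $e_3$ satisfies your angle condition at $p_t$, and the resulting $y=e_1+e_3$ violates the inequality for $u=e_3\in U$: $\langle y,u\rangle=1>0=\langle x_1,u\rangle+\langle x_2,u\rangle$. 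Any correct construction must couple the second leg to $x_2$ itself (e.g.\ through points on $\gamma_2$ or through comparison configurations involving both $\gamma_2$ and the test geodesics $\gamma_u$), and controlling the angle at $p_t$ against \emph{every} $u$ simultaneously is exactly the hard part that the prescribed-angle choice skips. Two further gaps: in an Alexandrov space with curvature bounded below there need not exist any geodesic issuing from $p_t$ with the prescribed angle (the space is not geodesically complete and not every direction is a geodesic direction), so $\eta_t$ may simply fail to exist; and in the inductive step your intermediate points $\tilde y_k$ lie only in $T_{\bs}\Sp$, not in $T'_{\bs}\Sp$, whereas your construction needs an actual geodesic emanating from $\bs$ representing them, so an additional approximation by geodesic directions would be required even if the two-vector case were repaired.
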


\begin{lem}[Approximation]
\label{lem:approx}
Let $U\subset T_{\bs}\Sp$ finite.
Take $B\subset \Sp$ measurable and a probability measure $\P\in\PS$ such that $\P\otimes\P \langle .,.\rangle_{\bs}=0$ and $\P(B)>0$.
Then, there exists a sequence $(y^n)_n$ such that for all $u\in U$
\begin{equation}
\label{eq:apprsc}
\frac{1}{\P(B)}\int_{B^c} \langle u,x\rangle_{\bs} d\P(x) \ge \limsup_n \langle u, y^n\rangle_{\bs}
\end{equation}
and
\begin{equation}
\label{eq:apprnorm}
\frac{1}{\P(B)^2}\int_{B}\int_{B}\langle x,y\rangle_{\bs} d\P\otimes\P(x,y)=\lim_n d^2(\bs,y^n).
\end{equation}
\end{lem}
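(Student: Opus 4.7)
The strategy is to construct $y^n$ as the output of the subadditivity lemma (Lemma \ref{lem:subadd}) applied to an empirical sample from a suitably conditioned version of $\P$, and then push the resulting bounds to the limit using the non-separable $L^2$ argument already employed in the proof of Lemma \ref{lem:sturmNNC}.

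Concretely, for each integer $N$, I would draw $X_1,\dots,X_N$ i.i.d.\ from $\bar\P := \P|_{B^c}/\P(B^c)$, view them in $T'_{\bs}\Sp$, and form the rescaled points $\xi_i := \tfrac{\P(B^c)}{N\P(B)}X_i$. The rescaling is chosen so that the prefactor $1/\P(B)$ in the statement appears automatically. Applying Lemma \ref{lem:subadd} to $\xi_1,\dots,\xi_N$ with the finite set $U$ and a tolerance $\eps_N\to 0$ produces a point $\tilde y_N\in T_{\bs}\Sp$ with
\begin{align*}
\langle \tilde y_N,u\rangle_{\bs} &\le \tfrac{\P(B^c)}{N\P(B)}\sum_{i}\langle X_i,u\rangle_{\bs}+\eps_N\quad(u\in U),\\
\|\tilde y_N\|_{\bs}^2 &\le \tfrac{\P(B^c)^2}{N^2\P(B)^2}\sum_{i,j}\langle X_i,X_j\rangle_{\bs}+\eps_N.
\end{align*}

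Next I would pass to the limit $N\to\infty$. The second-moment calculation used in the proof of Lemma \ref{lem:sturmNNC} yields $\tfrac1N\sum_i f(X_i)\to \bar\P f$ and $\tfrac1{N^2}\sum_{i,j}g(X_i,X_j)\to\bar\P\otimes\bar\P\,g$ in $L^2$ with no separability hypothesis on $\Sp$. Since $U$ is finite, a diagonal extraction produces a deterministic sequence $y^n:=\tilde y_{N_n}$ along which the averages $\tfrac1N\sum_i\langle X_i,u\rangle_{\bs}$ (for each $u\in U$) and $\tfrac1{N^2}\sum_{i,j}\langle X_i,X_j\rangle_{\bs}$ all converge to the corresponding integrals over $B^c$ and $B^c\times B^c$. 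Substituting back in the subadditivity bounds gives directly \eqref{eq:apprsc} and an analogous upper bound on $d^2(\bs,y^n)$.

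The key identification for the second conclusion is that, after harmlessly replacing $B$ by $B\cap\supp\P$, Lemma \ref{lem:sturmNNC} gives $\int\langle x,y\rangle_{\bs}\,d\P(y)=0$ for every $x\in B$, whence $\iint_{B\times B^c}\langle x,y\rangle_{\bs}\,d\P\otimes\P=-\iint_{B\times B}\langle x,y\rangle_{\bs}\,d\P\otimes\P$; combined with the barycenter identity $\iint_{B\times B}+2\iint_{B\times B^c}+\iint_{B^c\times B^c}=0$, this forces $\iint_{B^c\times B^c}\langle x,y\rangle_{\bs}\,d\P\otimes\P=\iint_{B\times B}\langle x,y\rangle_{\bs}\,d\P\otimes\P$. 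Thus the natural output $\tfrac1{\P(B)^2}\iint_{B^c\times B^c}$ of the construction coincides with the $\tfrac1{\P(B)^2}\iint_{B\times B}$ appearing in \eqref{eq:apprnorm}. The main obstacle that I expect to require most care is the matching lower bound $d^2(\bs,y^n)\ge \tfrac1{\P(B)^2}\iint_{B\times B}\langle x,y\rangle_{\bs}\,d\P\otimes\P+o(1)$, needed to turn the $\limsup$ into the asserted equality; this should follow from a Cauchy--Schwarz style argument in the ambient curvature $\ge 0$ space, $\|y^n\|_{\bs}\cdot\|u\|_{\bs}\ge|\langle u,y^n\rangle_{\bs}|$, coupled with the first inequality applied to appropriate test vectors to pin down $\lim_n d^2(\bs,y^n)$.
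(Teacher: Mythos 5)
The first half of your construction is sound and is essentially the paper's own route: sampling (you from the conditioned law, the paper from $\P$ itself, restricting the empirical sums to $B^c$ — a harmless variant), using the second-moment $L^2$ argument to get a deterministic subsequence, and applying Lemma \ref{lem:subadd} yields \eqref{eq:apprsc} together with the upper bound $\limsup_n\|y^n\|_{\bs}^2\le \P(B)^{-2}\int_{B^c}\int_{B^c}\langle x,y\rangle_{\bs}d\P\otimes\P$. The genuine gap is the equality \eqref{eq:apprnorm}. Lemma \ref{lem:subadd} only delivers upper bounds, and your proposed lower bound via Cauchy--Schwarz ``with appropriate test vectors'' has nothing to test against: $U$ is an arbitrary finite set fixed in the statement, and \eqref{eq:apprsc} for $u\in U$ cannot in general force $\|y^n\|_{\bs}$ away from small values (e.g.\ if every $u\in U$ has $\int_{B^c}\langle u,x\rangle_{\bs}d\P\ge 0$, no constraint survives). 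To pin the norm down from below you need a vector whose inner product with $y^n$ is provably very negative, and such a vector must be manufactured. This is what the paper does: it applies Lemma \ref{lem:subadd} a second time, to the empirical points lying in $B$, with $y^n$ placed in the bounded test set, producing a companion $z^n$ satisfying $\|z^n\|_{\bs}^2\le \P(B)^{-2}\int_B\int_B\langle\cdot,\cdot\rangle_{\bs}d\P_n\otimes\P_n+\eps_n$ and $\langle y^n,z^n\rangle_{\bs}\le \P(B)^{-2}\int_B\int_{B^c}\langle\cdot,\cdot\rangle_{\bs}d\P_n\otimes\P_n+\eps_n$. Since the full empirical double integral converges to $\P\otimes\P\langle\cdot,\cdot\rangle_{\bs}=0$, the decomposition over $B\times B$, $B^c\times B^c$ and the cross term gives $0\ge\lim_n(\|z^n\|_{\bs}-\|y^n\|_{\bs})^2\ge0$, which forces all three inequalities to be tight in the limit; this yields simultaneously that $\lim_n\|y^n\|_{\bs}=\lim_n\|z^n\|_{\bs}$ exists and that $\lim_n\|z^n\|_{\bs}^2=\P(B)^{-2}\int_B\int_B\langle x,y\rangle_{\bs}d\P\otimes\P$, i.e.\ \eqref{eq:apprnorm}, including the existence of the limit. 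This second application of the subadditivity lemma is the idea missing from your proposal.

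A secondary remark: your identity $\int_{B^c}\int_{B^c}\langle x,y\rangle_{\bs}d\P\otimes\P=\int_B\int_B\langle x,y\rangle_{\bs}d\P\otimes\P$ is true, but the derivation via ``harmlessly replacing $B$ by $B\cap\supp\P$'' and invoking Lemma \ref{lem:sturmNNC} pointwise tacitly uses $\P(\supp\P)=1$, which is precisely the kind of statement that is delicate without separability and that this paper is at pains to avoid. It can be repaired: the first part of the proof of Lemma \ref{lem:sturmNNC} gives $\int\langle x,y\rangle_{\bs}d\P(y)\ge0$ for every $x$, so the hypothesis $\P\otimes\P\langle\cdot,\cdot\rangle_{\bs}=0$ and Fubini give the vanishing for $\P$-almost every $x$, which suffices to integrate over $B$. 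Note, however, that the paper's argument never needs this identity at all: it works directly from the empirical three-term decomposition and the single hypothesis $\P\otimes\P\langle\cdot,\cdot\rangle_{\bs}=0$.
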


\begin{proof}
Using the same arguments as in Lemma \ref{lem:sturmNNC}, we see that the empirical measures $(\P_n)_n$ satisfy 
\[
\P_n\otimes\P_n f\rightarrow \P\otimes\P f,
\]
in $L^2(\P^{\otimes\infty})$, for any $f:\Sp\times\Sp\rightarrow\R$.
In particular, taking $f(x,y)=\langle x,y\rangle_{\bs}\mathbf{1}_{B\times B}(x,y)$, the following convergence holds in $L^2(\P^{\otimes\infty})$,
\begin{equation}
\label{eq:PP}
\int_{B}\int_{B}\langle .,.\rangle_{\bs}d\P_n\otimes \P_n \rightarrow \int_{B}\int_{B}\langle .,.\rangle_{\bs} d\P\otimes \P,
\end{equation}
and similarly for $B^c$.
Also, the law of large number ensures that almost surely, for all $u\in U$,
\begin{equation}
\label{eq:Pu}
\int_{B}\langle .,u\rangle_{\bs}d\P_n \rightarrow \int_{B}\langle .,u\rangle_{\bs} d\P,
\end{equation}
and again, the same for $B^c$.
Thus, there exists a subsequence (of a deterministic realisation of) $\P_n$ - that we rename $\P_n$ - such that \eqref{eq:PP} and \eqref{eq:Pu} both hold for all $u\in U$.

Then, applying Lemma \ref{lem:subadd} to finite sum
\[
\frac{1}{\P(B)}\int_{B^c}\langle .,u\rangle_{\bs}d\P_n,
\]
shows that there exists a sequence $(y^n)_n\in T'_{\bs}\Sp$ such that \eqref{eq:apprsc} holds and for a sequence $(\e_n)_n$ s.t. $\e_n\rightarrow 0$,
\[
\|y^n\|_{\bs}^2\le \frac{1}{\P(B)^2}\int_{B^c}\int_{B^c}\langle .,.\rangle_{\bs}d\P_n\otimes\P_n+\eps_n.
\]
Then, applying the same Lemma \ref{lem:subadd} again shows that there exists a sequence $(z^n)_n\subset T'_{\bs}\Sp$, such that
\begin{align}
0&\leftarrow \frac{1}{\P(B)^2}\int\int\langle x,y\rangle_{\bs} d\P_n\otimes\P_n(x,y)\nonumber\\
&=\frac{1}{\P(B)^2}\left(\int_{B}\int_{B}+\int_{B^c}\int_{B^c}+2\int_{B}\int_{B^c}\right)\langle x,y\rangle_{\bs} d\P_n\otimes\P_n(x,y)\nonumber\\
&\ge \|z^n\|_{\bs}^2 + \|y^n\|_{\bs}^2 + 2\langle y^n,z^n\rangle_{\bs}-\eps_n.\label{eq:ineq}
\end{align}
Letting $n\rightarrow \infty$, one obtains
\begin{align*}
0&\ge\lim_n\|z^n\|_{\bs}^2+2\langle y^n,z^n\rangle_{\bs}+\|y^n\|_{\bs}^2\\
&\ge\lim_n \|z^n\|_{\bs}^2-2\|y^n\|_{\bs}\|z^n\|_{\bs}+\|y^n\|_{\bs}^2\\
&=\lim_n(\|z^n\|_{\bs}-\|y^n\|_{\bs})^2\ge 0.
\end{align*}
and which shows $\lim_n \|y^n\|=\lim_n\|z^n\|$ and also that \eqref{eq:ineq} becomes an equality at the limit and therefore
\[
\lim_n\|z^n\|_{\bs}^2=\frac{1}{\P(B)^2}\int_{B}\int_{B}\langle x,y\rangle_{\bs}d\P\otimes\P(x,y)
\]
\end{proof}

\printbibliography[title=References]
\end{refsection}

\end{document}